\newtheorem{theorem}{Theorem}
\newtheorem{definition}{Definition}
\newtheorem{lemma}{Lemma}
\begin{document}

\title[Vertex maps on graphs - Perron-Frobenius Theory ] {Vertex maps on graphs - Perron-Frobenius Theory }

\author{Chris Bernhardt}

\address{Fairfield
University\\Fairfield\\CT 06824}

\subjclass[2000]{37E15, 37E25, 37E45}

\keywords{graphs, vertex maps, Markov matrix, Perron-Frobenius}

\email{cbernhardt@fairfield.edu}

\begin{abstract} The goal of this paper is to describe the connections between Perron-Frobenius theory and vertex maps on graphs. In particular, it is shown how Perron-Frobenius theory gives results about the sets of integers that can arise as periods of periodic orbits, about the concepts of transitivity and topological mixing, and about horseshoes and topological entropy.
\end{abstract}

\maketitle
   
\section{Introduction}

In one-dimensional combinatorial dynamics, the basic starting point is to study maps on the interval. The fundamental and most well-known result is Sharkovsky's Theorem --- a theorem that concerns the ordering of the periods of periodic orbits. In the proof of this theorem it becomes clear that it is important to consider not just the period of the periodic orbit, but the way that the points in the orbit are permuted. Suppose that $f$ has a periodic orbit with period $n$. Let $x_1, x_2, \dots,  x_n$ denote the orbit where $x_i < x_{i+1}$ for $1 \leq i < n$. Then the points of the orbit are said to have permutation $\theta$ if $\theta$ is the permutation of the integers $1, \dots, n$ with $f(x_i)=x_{\theta(i)}$ for $1 \leq i \leq n$. Maps of the interval give a  partial ordering of cyclic permutations. 

Given a permutation $\theta$ of the integers from $1$ to $n$, construct a piecewise linear map $L_{\theta} : [1,n] \to [1,n]$ by $L(i)=\theta(i)$ for $1 \leq i \leq n$ and $L$ is linear on $[i, i+1]$ for $1 \leq i < n$. Such a map is often called {a connect-the-dots} map. In a sense, that we will make clear later, the map $L_{\theta}$ is the ``simplest" map of an interval that has a periodic point with permutation $\theta$. The study of the partial ordering of permutations given by maps of the interval can often be reduced to studying connect-the-dots maps.

One way of generalizing these ideas is to re-interpret the above ideas. Instead of regarding $[1,n]$ as a closed interval, think of it as a graph. The edges in the graph are the closed subintervals $[i, i+1]$ for $1 \leq i < n$ and the vertices of the graph correspond to the integers $1, \dots, n$. In this interpretation we have a combinatorial graph with $n-1$ edges and $n$ vertices. With this interpretation, it now seems natural to generalize our study to maps of connected graphs in which the vertices are permuted. We define a {\em vertex map} on a graph $G$ to be a map from $G$ to itself that permutes the vertices.

One of the basic tools for analyzing the sets of periodic points is to consider powers of the associated Markov matrix. This matrix is a non-negative matrix with integer entries. Perron-Frobenius theory gives a complete description of powers of non-negative matrices. The goal of this paper is to briefly describe vertex maps and how the Markov matrix is obtained. Then to briefly describe Perron-Frobenius theory. Finally to convert Perron-Frobenius language into language that is typically used in dynamical systems theory. In particular, we describe what Perron-Frobenius theory has to say about the sets of integers that can arise as periods of periodic orbits, about the concepts of transitivity and topological mixing, and about horseshoes and topological entropy.

Vertex maps form a special class of maps of graphs, but with that proviso, it should be noted that the hypotheses we assume throughout are fairly weak. We assume that the vertices are permuted, but we do not assume they form one periodic orbit. We do not assume the the underlying map has a certain homotopy type. (More specialized results that take into account the homotopy type of the underlying map are given in \cite{B1,B2,B3}.)

\section{Vertex maps}

A {\em vertex map} is a map $f$ from a graph $G$ to itself that permutes the vertices. First we define the underlying graphs $G$ that we consider and how the map $f$ is linearized.

\subsection{Graphs}

An {\it edge} is a space homeomorphic to the closed interval $[0,1]$. The boundary points of the edges are {\it vertices}. An edge is not allowed to have a vertex as an interior point. The intersection of two distinct edges is empty, consists of one vertex or of two vertices. We assume that we have a finite number $e$ of edges and $v$ of vertices. The {\it graph} is the union of vertices and edges. We assume that graphs are connected. We allow the possibility that there is more than one edge between the same two vertices. We do not allow the possibility that an edge connects a vertex to itself. Each edge has two distinct vertices.

\subsection{The linearization of the map}

Each edge in the graph is homeomorphic to the unit interval. We use the homeomorphism to define the distance between points in an edge and to give each edge unit length. A path consisting of $m$ edges is defined to have length $m$ in the obvious way. Suppose that an edge $E_i$ is mapped by $f$ to a path with $m$ edges, then there is a natural induced map $f^*:[0,1] \to [0,m]$. We will say that $f$ is {\em linear} on $E_i$ if $f^*$ is linear. In this case we will also define the {\em modulus of the slope} of $f$ on $E_i$ to be $m$. We will denote this by $mods(f,E_i)=m$.

We now define the {\em linearization} of the map $f$, which we will denote by $L_f$.  For all vertices $V \in G$, we define $L_f(V)=f(V)$. If $E_i$ is an edge with endpoints $V_1$ and $V_2$, we define $L_f$ to map $E_i$ linearly onto the unique reduced path from $f(V_1)$ to $f(V_2)$ that is obtained from $f(E_i)$.

More formally, let $[0,1]=I$, we define $L_f:G  \to G$ to be the {\em linearization} of $f$ if for each edge $E_i$ there is homotopy $h_i:E_i \times I \to G $ which has the following properties : $h_i(x,0)=f(x)$ for all $x \in E_i$; $h_i(x,1)=L_f(x)$ for all $x \in E_i$; $h_i(V_1, t)=f(V_1)=L_f(V_1)$ for all $t \in I$; $h_i(V_2, t)=f(V_2)=L_f(V_2)$ for all $t \in I$; and such that $L_f$ is linear on $E_i$.
If $L_f$ is the linearization of a map $f$, we say $L_f$ is \emph{linearized}.

In the literature, the maps that we are calling linearized are sometimes referred to as {\em linear models} for tree maps or {\em connect-the-dots} maps for interval maps, see \cite{ ALM}.

\section{Markov Graphs}

Given a  linearized map $L_f$ that permutes the vertices, we construct an {\em Markov Graph}, $MG$,  in the following way. The vertices of  $MG$ correspond to the edges of $G$. For each edge $E_i$ in the graph $G$ there is a vertex $E_i^\prime$ in  $MG$. A directed edge will be drawn from one $MG$ vertex, $E_j^\prime$ to another, $E_i^\prime$, if the edge $E_j$ in $G$ has a closed subinterval that maps under $L_f$ entirely onto $E_i$. A directed edge will be drawn for each such closed subinterval.  Though we will not use the term, in the literature, if $E_j$ contains a closed subinterval with image equal to $E_i$ it is said that $E_j$ {\em f-covers} $E_i$. Below we sketch standard results for Markov Graphs and refer the reader to \cite{ALM} for formal proofs that are stated in terms of $f$-covers.

Given a Markov Graph, we can define a sequence $E_{i_0}^\prime E_{i_1}^\prime\cdots E_{i_d}^\prime$ to be a \emph{walk} of length $d$ in the $MG$, where  each vertex $E_{i_k}^\prime$ in the sequence will have an edge connecting it to $E_{i_{k+1}}^\prime$ in the $MG$, or equivalently, there is a closed subinterval  of $E_{i_k}$ that gets mapped exactly onto $E_{i_{k+1}}$. We call a walk \emph{closed} if its first and last vertices are equal.

Closed walks are useful because if there is a closed walk of length $d$ from vertex $E_k^\prime$ to itself in the $MG$, then $L_f$ has a periodic point of period $d$. 
This is because if $E_{i_0}^\prime E_{i_1}^\prime \cdots E_{i_{d-1}}^\prime E_{i_d}^\prime$ is a closed walk with $E_{i_0}^\prime=E_{i_d}^\prime$
then we know that there is a subinterval $J_{d-1}$ in $E_{i_{d-1}}$ such that $L_f(J_{d-1})=E_{i_{d}}=E_{i_0}$. We can then find a subinterval $J_{d-2}$ in $E_{i_{d-2}}$ such that $f(J_{d-2})=J_{d-1}$. We proceed inductively until we obtain a subinterval $J_{0}$ of $E_{i_0}$ with the property that $L_f^d(J_0)=E_{i_{d}}=E_{i_0}$. Since $J_0 \subseteq E_{i_0}$ and $f^d(J_0)=E_{i_0}$, it follows that $L_f^d$ must have a fixed point in $E_{i_0}$. 

Conversely, if $x$ is a periodic point of $L_f$ with period $d$ and $x$ is not a vertex, then for each $i$ there is a unique edge $E$ in $G$ such that $f^i(x)\in E$. This sequence of edges gives a closed walk in the Markov graph.

Notice that the map from $L_f^d: J_0 \to E_{i_0}$ is linear. We can assign a {\em slope} to $L_f^d$ on the interval $J_0$ by taking the product of the moduli of the slopes of $L_f$ on the edges that make the closed walk and then multiplying by $\pm 1$ depending on whether $L_f^d$ is orientation preserving or reversing on $J_0$. More formally, we define the slope to be $orient(L_f^d,J_0)\prod^d_{k=1}(mods(L_f, E_{i_k})$, where $orient(L_f^d,J_0)=1$ if $L_f^d$ is orientation preserving $J_0$ and $orient(L_f^d,J_0)=-1$ if $L_f^d$ is orientation reversing on $J_0$.

There is a natural extension to closed walks. Given closed walk  $E_{i_0}^\prime E_{i_1}^\prime \cdots E_{i_{d-1}}^\prime E_{i_d}^\prime $ with $E_{i_0}^\prime=E_{i_d}^\prime$, we define its {\em slope} to be the slope of $L_f^d$ on the interval $J$, where $J$ is an interval such that $L_f^k(J) \subseteq E_{i_k}$ for $0 \leq k \leq d$.

We state some easy consequences of the above definitions.

\begin{lemma}
Let $G$ be a graph, $L_f$ a linearized vertex map and $MG$ its  Markov Graph. Let $E_{i_0}^\prime E_{i_1}^\prime \cdots E_{i_{d-1}}^\prime E_{i_d}^\prime$ be a closed walk in  $MG$ with $E_{i_0}^\prime=E_{i_d}^\prime$. Let $J_0$ be a subinterval of $E_0$ with $L_f^k(J_0) \subseteq E_{i_k}$, for $1\leq k \leq d$ and $L_f^d(J_0)=E_{i_0}$. 

\begin{enumerate}

\item If the slope of $L_f^d$ on $J_0$ is not $1$, then $L_f^d: J_0 \to E_{i_0}$ has a unique fixed point.

\item If the slope of $L_f^d$ on $J_0$ is negative, then $L_f^d: J_0 \to E_{i_0}$ has a unique fixed point that is in the interior of $J_0$.

\item If the slope of $L_f^d$ on $J_0$ is $1$, then  $J_0=E_{i_0}$ and $L_f^d: J_0 \to E_{i_0}$ is the identity map.

\end{enumerate}

\end{lemma}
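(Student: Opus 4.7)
The plan is to use the canonical homeomorphism of $E_{i_0}$ with $[0,1]$ to coordinatize, so that $J_0$ corresponds to a subinterval $[a,b]\subseteq[0,1]$ and $L_f^d|_{J_0}$ becomes an affine function $g(x)=sx+c$. The signed slope $s$ is exactly the slope defined in the paragraph preceding the lemma, and since $g$ maps $[a,b]$ onto $[0,1]$, linearity forces $|s|(b-a)=1$. Because $|s|=\prod_k mods(L_f, E_{i_k})$ is a product of positive integers, $|s|\geq 1$, with equality if and only if $b-a=1$, i.e.\ $J_0=E_{i_0}$.

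Part (3) then falls out immediately: $s=1$ forces $J_0=E_{i_0}$, and an orientation-preserving affine self-map of $[0,1]$ with slope $1$ must be the identity. For parts (1) and (2) I would analyze the continuous function $h(x)=g(x)-x$ on $[a,b]$. Since $s\neq 1$, the slope $s-1$ of $h$ is nonzero, so $h$ is strictly monotonic and thus has at most one zero, giving uniqueness. For existence, split on the sign of $s$. When $s\geq 2$, $g(a)=0$ and $g(b)=1$, so $h(a)=-a\leq 0$ and $h(b)=1-b\geq 0$, and the intermediate value theorem supplies a fixed point in $[a,b]$. When $s\leq -1$, $g(a)=1$ and $g(b)=0$, so $h(a)=1-a$ and $h(b)=-b$; here the constraint $b-a=1/|s|\leq 1$ combined with $0\leq a<b\leq 1$ forces both $a<1$ and $b>0$, yielding the strict inequalities $h(a)>0>h(b)$. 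The unique zero therefore lies in the open interval $(a,b)$, which is the interior of $J_0$, proving (2).

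I do not anticipate a real obstacle: the argument is elementary one-dimensional calculus, made available by the linearity of $L_f^d$ on $J_0$. The only care needed is tracking strict versus non-strict inequalities in the negative-slope case, which is what distinguishes \emph{fixed point in $J_0$} from \emph{fixed point in the interior of $J_0$}; in the positive-slope case with $s\geq 2$ the fixed point can indeed land on a vertex of $G$ (when $a=0$ or $b=1$), but this is consistent with (1) making no interior claim.
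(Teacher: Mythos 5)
Your proof is correct and is exactly the elementary argument the paper has in mind; the paper states this lemma without proof as an ``easy consequence,'' and your coordinatization of $E_{i_0}$ as $[0,1]$, the identity $|s|(b-a)=1$, and the intermediate value theorem applied to $g(x)-x$ supply precisely the missing details, including the strict inequalities needed to place the fixed point in the interior in the negative-slope case.
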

 
As noted above, closed walks in the Markov Graph give us information about the periodic points of the linearized map $L_f$. Suppose that $L_f$ is the linearization of the vertex map $f$, then  these closed walks also give us information about periodic points of $f$. This follows from the fact that if there is a subinterval of $E_{i_k}$ that gets mapped exactly onto $E_{i_{k+1}}$ by $L_f$, then there must be a subinterval of $E_{i_k}$ that gets mapped exactly onto $E_{i_{k+1}}$ by $f$. With these observations we can re-write the previous lemma with the focus on $f$.

\begin{lemma}
Let $G$ be a graph, $L_f$ a linearized vertex map and $MG$ its Markov Graph. Let $E_{i_0}^\prime E_{i_1}^\prime \cdots E_{i_{d-1}}^\prime E_{i_d}^\prime$ be a closed walk in the $MG$ with $E_{i_0}^\prime=E_{i_d}^\prime$. Let $J_0$ be a subinterval of $E_0$ with $L_f^k(J_0) \subseteq E_{i_k}$, for $1\leq k \leq d$ and $L_f^d(J_0)=E_{i_0}$.

\begin{enumerate}
\item There exists   a subinterval $I_0$ of $E_0$ with $f^k(I_0) \subseteq E_{i_k}$, for $1\leq k \leq d$ and $f^d(I_0)=E_{i_0}$. 
\item There exists at least one point in $I_0$ that is fixed by $f^d$.
\item If the slope of $L_f^d$ on $J_0$ is negative, then $f^d: I_0 \to E_{i_0}$ has a fixed point  in the interior of $I_0$.

\end{enumerate}
\end{lemma}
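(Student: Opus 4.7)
The plan is to mirror the proof of the previous lemma (which was stated for $L_f$) and transfer each step across the homotopy from $L_f$ back to $f$. The crucial input is the observation already noted in the paper: whenever $L_f$ maps a closed subinterval of an edge $E_j$ linearly onto $E_i$, the homotopy between $f$ and $L_f$ (rel endpoints) forces $f$ itself to carry some closed subinterval of $E_j$ onto $E_i$, and in fact the correspondence can be taken to preserve the direction in which the edge $E_i$ is traversed.

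For (1), I would argue by backward induction along the closed walk. At the last step, pick a closed subinterval $I_{d-1} \subseteq E_{i_{d-1}}$ with $f(I_{d-1}) = E_{i_0}$, using the observation above applied to the $L_f$-cover provided by the walk. Inductively, given $I_{k+1} \subseteq E_{i_{k+1}}$, choose any closed subinterval $C_k \subseteq E_{i_k}$ with $f(C_k) = E_{i_{k+1}}$; since $f|_{C_k}$ is continuous and surjective onto $E_{i_{k+1}} \supseteq I_{k+1}$, take preimages in $C_k$ of the two endpoints of $I_{k+1}$ and use connectedness to extract a closed subinterval $I_k \subseteq C_k$ with $f(I_k) = I_{k+1}$. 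Iterating down to $k=0$ produces the required $I_0$.

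For (2), parametrize $E_{i_0}$ as $[0,1]$, write $I_0 = [\alpha, \beta] \subseteq [0,1]$, and consider the continuous function $g(x) = f^d(x) - x$ on $I_0$. Since $f^d(I_0) = E_{i_0} = [0,1]$, there are points $u, v \in I_0$ with $f^d(u) = 0 \leq u$ and $f^d(v) = 1 \geq v$; hence $g(u) \leq 0 \leq g(v)$, and the intermediate value theorem yields a fixed point of $f^d$ in $I_0$.

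For (3), the extra information is that the orientation of $f^d$ on $I_0$ matches the orientation of $L_f^d$ on $J_0$. This is the step I expect to be the main obstacle, because it requires refining the construction in (1) so that at each stage the subinterval $I_k$ is the one whose $f$-image traverses $E_{i_{k+1}}$ with the same direction as the corresponding linear piece of $L_f$ does. The justification is that $L_f(E_{i_k})$ is the reduction of the path $f(E_{i_k})$, so each surviving traversal of $E_{i_{k+1}}$ in the reduced path selects a specific sub-traversal of $f(E_{i_k})$ in the same direction; doing this compatibly at every step makes the product of orientations equal the sign of the slope of $L_f^d$ on $J_0$. Granted this, if that slope is negative then $f^d$ is orientation-reversing on $I_0 = [\alpha,\beta]$, so $f^d(\alpha) = 1$ and $f^d(\beta) = 0$; since $0 \leq \alpha < \beta \leq 1$, we get $g(\alpha) = 1 - \alpha > 0$ and $g(\beta) = -\beta < 0$ strictly, and the intermediate value theorem now places the fixed point in the open interval $(\alpha,\beta)$, i.e. the interior of $I_0$.
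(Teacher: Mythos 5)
Your proposal is correct and follows essentially the same route as the paper, which proves this lemma only by the observation stated just before it (an $L_f$-cover of $E_{i_{k+1}}$ by $E_{i_k}$ forces an $f$-cover, since the two maps are homotopic rel the vertices) combined with the pullback-and-intermediate-value argument already used for Lemma 1. You simply supply more detail than the paper does, in particular for the orientation-matching step in part (3), which is handled correctly.
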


\section{ The Markov matrix associated to $L_f$}

In this section we define a matrix that encodes much of  the information given by $L_f$ or, equivalently by the $MG$. This matrix will all be of size $e \times e$ where $e$ is the number of edges in the graph $G$ or, equivalently, the number of vertices in the $MG$. 

Given an $MG$ with $e$ vertices we will define the {\em Markov matrix}, $M(L_f)=M$, to be the matrix that has entry $M_{i,j}$ equal to the number of directed edges that go from $E_j^\prime$ to $E_i^\prime$ in the Markov graph.

The Markov matrix is a standard tool in analyzing combinatorial maps, both \cite{ALM} and \cite{BC} are excellent references. (These two books are also good references for the dynamical system terminology introduced in the last two sections of this paper.)

The basic result concerning powers of these matrices is stated in the lemma below. It is first stated in terms of the Markov graph and then the equivalent statement in terms of the underlying graph $G$ is given.

 \begin{lemma}
 Given a graph $G$ and a linearized map $L_f$. Denote its Markov matrix by $M$.
 
 \begin{enumerate}
 \item For any $1 \leq i,j \leq e$, $M^k_{i,j}$ equals the number of paths in $MG$ with length $k$ that begin at $E_j^\prime$ and end at $E_i^\prime$.

 \item There are $M^k_{i,j}$
  closed subintervals of edge $E_j$ in $G$ that have disjoint interiors and such that the image of each subinterval under $L_f^k$ is $E_i$.  \end{enumerate}
 \end{lemma}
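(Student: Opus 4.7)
The plan is to prove both parts simultaneously by induction on $k \geq 1$, treating the walk-count for part (1) as the combinatorial skeleton and using the linearization to transfer it to subintervals in part (2).

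For the base case $k=1$: part (1) is immediate from the definition, since $M_{i,j}$ is by construction the number of directed edges from $E_j^\prime$ to $E_i^\prime$ in $MG$. For part (2), I would use that $L_f$ is linear on each edge, so $L_f|_{E_j}$ is a linear map from $E_j \cong [0,1]$ onto a path of length $m_j = mods(L_f, E_j)$ in $G$. Parametrizing the image path as $[0, m_j]$, the preimages of $E_i$ are the pullbacks of those unit subintervals $[\ell-1, \ell]$ whose corresponding edge in the reduced path equals $E_i$. These pullbacks are disjoint closed subintervals of $E_j$, and by the definition of $MG$ the number of such occurrences of $E_i$ along the path equals the number of directed edges from $E_j^\prime$ to $E_i^\prime$, i.e.\ $M_{i,j}$.

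For the inductive step, assume both statements hold for $k$ and prove them for $k+1$. For part (1), use the standard identity $M^{k+1}_{i,j} = \sum_\ell M^k_{i,\ell} M_{\ell,j}$: every walk of length $k+1$ from $E_j^\prime$ to $E_i^\prime$ factors uniquely as an initial edge $E_j^\prime \to E_\ell^\prime$ followed by a walk of length $k$ from $E_\ell^\prime$ to $E_i^\prime$, and conversely any such pair concatenates. For part (2), fix $i,j$ and each intermediate edge index $\ell$. By the $k=1$ case there are $M_{\ell,j}$ closed subintervals $K \subseteq E_j$ with disjoint interiors and $L_f(K) = E_\ell$; on each such $K$, the restriction $L_f|_K$ is a linear homeomorphism onto $E_\ell$. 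By the inductive hypothesis applied to $E_\ell$, there are $M^k_{i,\ell}$ closed subintervals of $E_\ell$ with disjoint interiors whose images under $L_f^k$ equal $E_i$. Pulling these back through $L_f|_K$ produces $M^k_{i,\ell}$ closed subintervals of $K$ with disjoint interiors each mapped by $L_f^{k+1}$ onto $E_i$. Summing over $K$ and $\ell$ yields the required $\sum_\ell M_{\ell,j} M^k_{i,\ell} = M^{k+1}_{i,j}$ subintervals.

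The main obstacle I expect is the disjointness bookkeeping in the inductive step. Within a single $K$, disjointness of the pulled-back subintervals follows from the inductive hypothesis together with injectivity of $L_f|_K$. Across different $K$'s---whether they come from the same $\ell$ or different $\ell$'s---the $K$'s themselves have disjoint interiors in $E_j$ by the $k=1$ analysis, so any subintervals they contain are automatically separated. A minor terminological point worth flagging is that the word \emph{path} in the statement of part (1) should be read in the sense of \emph{walk} introduced earlier in the paper, since $M^k_{i,j}$ counts walks (with possible vertex repetition), not simple paths.
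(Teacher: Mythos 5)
Your proof is correct. The paper does not actually prove this lemma --- it is stated as a standard result with the reader referred to \cite{ALM} for formal proofs in the language of $f$-covers --- and your simultaneous induction on $k$, with the base case coming from the definition of the Markov graph and the inductive step from the factorization $M^{k+1}_{i,j}=\sum_\ell M^k_{i,\ell}M_{\ell,j}$ together with pulling back subintervals through the linear homeomorphisms $L_f|_K$, is exactly the standard argument the paper is implicitly invoking. Your side remarks (disjointness across distinct $K$'s, and reading ``paths'' as ``walks'') are both apt.
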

 

\section{Perron-Frobenius Theory}

Much of this section is standard directed graph theory. A good book that contains Perron-Frobenius Theory is  by Lind and Marcus \cite{LM}. We use their notation. 

\begin{definition}

A square real matrix is {\em non-negative} if each entry is non-negative.
\end{definition}

\begin{definition}

A non-negative matrix $M$ is {\em irreducible} if for each $i,j$ corresponding to an entry in the matrix, there exists a positive integer $k$ such that $(M^k)_{ij}>0$.
\end{definition}

\begin{definition}

A non-negative matrix $M$ is {\em primitive} if there exists a positive integer $N$ such that for any $k \geq N$ every entry of $M^k$ is positive.
\end{definition}

\subsection{Irreducible components}

Given a Markov graph we say that an vertex $E_i^\prime$ {\em communicates} with $E_j^\prime$ if there is a walk from $E_i^\prime$ to $E_j^\prime$ and a walk from $E_j^\prime$ to $E_i^\prime$. Communication is an equivalence relation that partitions the vertices of $MG$ into equivalence classes. Given two communicating equivalence classes $C_i$ and $C_j$ we say $C_i<C_j$ if there exists a walk from a vertex in $C_i$ to $C_j$. We can use this ordering to re-label the $k$ equivalence classes $C_1, C_2, \dots , C_k$, with the property that if $C_i<C_j$ then $i<j$.

We also relabel the vertices of $MG$ so that if $E_l^\prime \in C_i$ and $E_m^\prime \in C_j$, then $l < m$ if $i<j$. With this ordering, the Markov matrix, $M$ is given by
\[M=\left [ \begin{matrix} 
A_1 &0 &0 &\dots &0\\
* &A_2 &0 &\dots &0\\
* &* &A_3 &\dots & 0\\
\vdots &\vdots &\vdots &\ddots &\vdots\\
* &* &* &\dots &A_k
\end{matrix} \right ] \]

We let $MG_i$ denote the subgraph of $MG$ that consists of vertices in $C_i$ and has edges of $MG$ for which both the initial and terminal vertices belong to $C_i$.

The graphs $MG_i$ are called an {\em irreducible components of $MG$} and the matrices $A_i$ are the {\em irreducible components} of $M$. 

If we let $\chi_A(t)$ denote the characteristic polynomial of a matrix $A$ then 

\[\chi_M(t)=\chi_{A_1}(t)\chi_{A_2}(t) \dots \chi_{A_k}(t).\]

Consequently, at least one of the irreducible components must have the same spectral radius as $M$.
We state this as a theorem.

\begin{theorem}
Given an $MG$ we can partition it into communicating classes. Let $M$ denote the Markov matrix associated to $MG$ and let $\lambda_M$ denote its spectral radius. Then there exists a class $C_I$ such that its associated matrix $M_I$ is irreducible and has spectral radius  $\lambda_M$.

\end{theorem}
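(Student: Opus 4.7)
The plan is to exploit the block lower-triangular structure of $M$ that is already laid out in the excerpt, and then separately verify that each diagonal block is irreducible in the Perron-Frobenius sense.

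First, I would simply invoke the factorization
\[
\chi_M(t) = \chi_{A_1}(t)\chi_{A_2}(t)\cdots\chi_{A_k}(t)
\]
which the excerpt has already derived from the block lower-triangular form. This identifies the multiset of eigenvalues of $M$ with the disjoint union of the multisets of eigenvalues of the $A_i$. In particular, the spectral radius satisfies
\[
\lambda_M = \max_{1 \le i \le k} \lambda_{A_i},
\]
so one can choose an index $I$ for which $\lambda_{A_I} = \lambda_M$. This disposes of the ``spectral radius'' half of the conclusion essentially for free.

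The real content is showing that $A_I$ is irreducible, i.e. that for every pair of vertices $E_l^\prime, E_m^\prime \in C_I$ there is some power $r$ with $(A_I^r)_{ml} > 0$. By the definition of Markov matrix powers (Lemma 3 from the excerpt, applied to the submatrix), this amounts to showing that there is a walk of some positive length from $E_l^\prime$ to $E_m^\prime$ inside the subgraph $MG_I$. From the definition of the communicating class $C_I$, we know there is at least such a walk in the ambient $MG$; the thing to check is that this walk can be taken to lie entirely in $MG_I$. The key observation is the following: if a walk from $E_l^\prime$ to $E_m^\prime$ passes through some intermediate vertex $E_p^\prime$, then there is a walk $E_l^\prime \to E_p^\prime$ and a walk $E_p^\prime \to E_m^\prime \to E_l^\prime$ (the second piece using that $E_m^\prime$ communicates back to $E_l^\prime$). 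Hence $E_p^\prime$ communicates with $E_l^\prime$, so $E_p^\prime \in C_I$. Thus every vertex on any walk between two members of $C_I$ must already belong to $C_I$, i.e. the walk lies in $MG_I$.

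I expect this last ``walks stay inside the class'' argument to be the only step requiring care; everything else is immediate from the block structure. Once it is in hand, the definition of irreducibility is satisfied for $A_I$, and combined with the spectral identification above, the theorem follows.
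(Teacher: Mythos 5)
Your proposal is correct and follows essentially the same route as the paper, which derives the result directly from the factorization $\chi_M(t)=\chi_{A_1}(t)\cdots\chi_{A_k}(t)$ coming from the block lower-triangular form. The only addition is your explicit check that walks between two vertices of a communicating class stay inside that class (so the diagonal blocks really are irreducible), a detail the paper leaves implicit in calling the $A_i$ ``irreducible components.''
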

\subsection{Structure of irreducible components -- Frobenius form}

We will now study how the irreducible components are structured. We will let $C_I$ denote an irreducible component and $M_I$ its corresponding Markov matrix. 

Given an irreducible component of a directed graph, the greatest common divisor of the lengths of all the closed walks is called the {\em period} of the irreducible component. We use the period to decompose $M_I$ into primitive submatrices. If the period is $1$, then $M_I$ is primitive.

Given an irreducible component of a directed graph with period $p$, we pick a vertex $E^\prime$ and group the vertices into $p$ sets by \[P_i=\{E_j^\prime:\text{there is a walk from $E^\prime$ to $E_j^\prime$ of length } n \text{ with } n \equiv i\mod p\}.\]

This gives a partition of the vertices in $C_I$. We will call these classes {\em distance mod p} classes.

We can relabel the vertices so that if $E_l^\prime \in P_i$ and $E_m^\prime \in P_j$, then $l < m$ if $i<j$ and such that the vertices in $P_i$ are mapped into $P_{i+1\mod p}$ for $1 \leq i \leq p$. With this relabeling the Markov matrix has the form:

\[M_I=\left [ \begin{matrix} 
0 &A_1 &0 &0 &\dots &0\\
0 &0&A_2 &0 &\dots &0\\
 0&0&0&A_3 &\dots & 0\\
\vdots &\vdots &\vdots &\vdots &\ddots &\vdots\\
0&0&0&0&\dots &A_{p-1}\\
A_p&0&0&0&\dots &0
\end{matrix} \right ] \]

When the Markov matrix of the component is raised to the period of the component we obtain a block diagonal matrix:

\[M_I^p=\left [ \begin{matrix} 
D_1 &0 &0  &\dots &0\\
0 &D_2&0  &\dots &0\\
 0&0&D_3&\dots & 0\\
\vdots &\vdots &\vdots &\ddots &\vdots\\
0&0&0&\dots &D_p\\

\end{matrix} \right ] \]

It need not be the case that each of the $P_i$ contains the same number of vertices. This means that in general the matrices $A_i$ are rectangular and that the square matrices $D_i$ need not have the same size. However, the matrices $D_i$ are all primitive. Given $D_i$ and $D_j$ they may have different sizes and so different number of eigenvalues. However they only differ on the number of zero eigenvalues --- their sets of non-zero eigenvalues are equal and so they have the same spectral radius.

We summarize these results as:

\begin{theorem} Let $C_I$ be a communicating component of a Markov graph with associated matrix $M_I$. Let $p$ denote its period. We can partition the vertices into distance mod $p$ classes. For any  such class $P_i$ with associated matrix, let $D_i$ denote the submatrix of $M_I^p$ that corresponds to $P_i$. Then for each $i$, $D_i$ is primitive.

Let $\lambda_{D_i}$ denote the spectral radius of $D_i$ and $\lambda_{M_I}$ denote the spectral radius of $M_i$. Then for each $i$, $\lambda_{D_i}=\lambda_{M_I}^p$.

\end{theorem}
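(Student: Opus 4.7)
The plan is to establish in turn (i) that the distance-mod-$p$ partition is well-defined and gives the displayed block form, (ii) that each $D_i$ is irreducible, (iii) that each $D_i$ is primitive, and (iv) the spectral-radius equality.

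First I would check that the sets $P_i$ are well-defined. If $w_1, w_2$ are two walks from the chosen base vertex $E^\prime$ to $E_j^\prime$ of lengths $n_1, n_2$, pick by irreducibility of $M_I$ a walk $u$ from $E_j^\prime$ back to $E^\prime$ of length $m$. Then $w_1 u$ and $w_2 u$ are closed walks at $E^\prime$, so $p$ divides both $n_1 + m$ and $n_2 + m$, forcing $n_1 \equiv n_2 \pmod{p}$. The same idea shows that if $E_a^\prime \in P_i$ and there is an edge from $E_a^\prime$ to $E_b^\prime$, then $E_b^\prime \in P_{i+1 \bmod p}$, which justifies the block-cyclic shape of $M_I$.

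Next, for irreducibility of $D_i$, given $E_a^\prime, E_b^\prime \in P_i$ irreducibility of $M_I$ supplies a walk from $E_a^\prime$ to $E_b^\prime$ of some length $m$; prepending a walk from $E^\prime$ to $E_a^\prime$ of length $\equiv i \pmod p$ yields a walk from $E^\prime$ to $E_b^\prime$ of length $\equiv i + m \pmod p$, which must equal $i$ mod $p$ since $E_b^\prime \in P_i$. Thus $p \mid m$ and $(M_I^m)_{b,a} > 0$, hence $(D_i^{m/p})_{b,a} > 0$, proving $D_i$ irreducible. To upgrade to primitivity I would use the standard directed-graph fact that for an irreducible non-negative matrix the gcd of closed-walk lengths through any fixed vertex equals the period of the matrix. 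Since every closed walk through $E_a^\prime$ in the $M_I$-graph has length $kp$ and corresponds to a closed walk of length $k$ in the $D_i$-graph, the gcd of closed-walk lengths through $E_a^\prime$ in $D_i$ is $1$, so $D_i$ has period $1$ and is therefore primitive. This ``gcd equals period'' fact is the single substantive lemma I would have to invoke; otherwise I would give a short self-contained number-theoretic argument for it using two closed walks at $E^\prime$ combined with an out-and-back excursion to $E_a^\prime$, which is the step I expect to be the main obstacle.

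For the spectral radii, the eigenvalues of $M_I^p$ are the $p$-th powers of those of $M_I$, so $\lambda_{M_I^p} = \lambda_{M_I}^p$. Since $M_I^p$ is block-diagonal with blocks $D_1, \ldots, D_p$, its spectrum is the union of the spectra of the $D_i$, giving $\lambda_{M_I}^p = \max_i \lambda_{D_i}$. To see all $\lambda_{D_i}$ coincide, note that the cyclic block structure of $M_I$ implies $D_i = A_i A_{i+1} \cdots A_{i-1}$ with indices taken mod $p$; the standard fact that $XY$ and $YX$ share the same nonzero eigenvalues, applied repeatedly, shows that any two cyclic shifts of the product $A_1 A_2 \cdots A_p$ have equal nonzero spectra, so all $D_i$ have the same spectral radius, which must then equal $\lambda_{M_I}^p$.
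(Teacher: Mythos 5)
Your proposal is correct and follows essentially the same route as the paper, which presents this theorem as a summary of the preceding discussion (distance mod $p$ classes, the block-cyclic form of $M_I$, the block-diagonal form of $M_I^p$, and the equality of the nonzero spectra of the $D_i$) without writing out detailed justifications. You supply the standard Perron--Frobenius details the paper leaves implicit --- well-definedness of the classes via closed-walk lengths, irreducibility and aperiodicity of each $D_i$, and the $XY$ versus $YX$ argument for the cyclic products --- all of which are sound.
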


\subsection{Structure of primitive components -- Perron form}

If a matrix $A$ is primitive, then it has a positive real eigenvalue $\lambda_{A}$ with the property that if $\lambda$ is any other eigenvalue then $|\lambda| < \lambda_{A}$. Associated to this eigenvalue there is a non-negative right eigenvector $R$ and any other positive right eigenvector is a scalar multiple of $R$.  There is also a non-negative left eigenvector $L$ associated to $\lambda_{A}$ and any other positive left eigenvector is a scalar multiple of $L$. 

We will think of $L$ and $R$ as matrices. Choose $L$ and $R$ such that $LR=[1]$. With this choice, let $H=RL$. (Equivalently, $R$ and $L$ are chosen so that their dot product is $1$, and  $H$ is the tensor product $R \otimes L$.) Then \[\lim_{k \to \infty} \left( \frac{1}{\lambda_{A}} A \right)^k=H.\]

In what follows we will need to calculate $\log \lambda_A$. We re-state the above as follows:

\begin{theorem} \label{special lambda} Suppose that $A$ is a primitive matrix with spectral radius $\lambda_A$. Then for any $i$, $j$, \[\lim_{k \to \infty} \frac{1}{k}\log [(A^k)_{i,j}]=\log \lambda_A.\]

\end{theorem}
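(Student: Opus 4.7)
The plan is to leverage the Perron limit
\[\lim_{k \to \infty}\left(\frac{1}{\lambda_A}A\right)^{\!k} = H = RL\]
stated immediately before the theorem, and convert the entrywise convergence of $(A^k)_{i,j}/\lambda_A^k$ into the claimed asymptotic for $\tfrac{1}{k}\log(A^k)_{i,j}$. The idea is that once we know $(A^k)_{i,j} = \lambda_A^k\bigl(H_{i,j} + o(1)\bigr)$ with $H_{i,j}$ a strictly positive constant, taking the logarithm and dividing by $k$ annihilates the $O(1)$ term $\log H_{i,j}$ and leaves $\log\lambda_A$.

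The step that actually requires work is checking that $H_{i,j} > 0$ for every pair $i,j$. Since $H = RL$ has $H_{i,j} = R_i L_j$, this reduces to showing that the Perron eigenvectors $R$ and $L$ are strictly positive, not merely non-negative as the quoted background permits. I would deduce this directly from primitivity: by definition there is an $N$ with every entry of $A^N$ strictly positive. From $AR = \lambda_A R$ we get $A^N R = \lambda_A^N R$, and since $R$ is a non-negative, non-zero column vector while $A^N$ has all positive entries, every entry of $A^N R$ is positive. Hence every entry of $\lambda_A^N R$, and therefore of $R$, is positive. The same argument applied on the other side, using $LA = \lambda_A L$, gives $L_j > 0$ for all $j$. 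Thus $H_{i,j} = R_i L_j > 0$.

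With strict positivity of $H_{i,j}$ in hand, the conclusion is immediate. For all sufficiently large $k$, $(A^k)_{i,j} > 0$, so we may write
\[\frac{1}{k}\log(A^k)_{i,j} \;=\; \log\lambda_A \;+\; \frac{1}{k}\log\!\left(\frac{(A^k)_{i,j}}{\lambda_A^k}\right).\]
The argument of the second logarithm converges to the finite positive number $H_{i,j}$, so $\log$ of it stays bounded, and dividing by $k$ sends the second term to $0$. This yields $\tfrac{1}{k}\log(A^k)_{i,j} \to \log\lambda_A$, as desired.

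The only genuine obstacle is the strict positivity of the Perron eigenvectors, and this falls out cleanly from primitivity via one application of $A^N$; no spectral-gap estimates or Jordan-form bookkeeping are required.
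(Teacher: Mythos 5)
Your proof is correct and follows essentially the same route as the paper, which simply re-states the Perron limit $\lim_{k\to\infty}(\lambda_A^{-1}A)^k=H$ as the logarithmic asymptotic without further argument. You additionally supply the one detail the paper leaves implicit --- that $H_{i,j}=R_iL_j>0$, via strict positivity of the Perron eigenvectors of a primitive matrix --- and that verification is accurate.
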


We can combine the previous three theorems to obtain the following, where $E_i^\prime$ is a vertex in the communicating class with largest spectral radius and $p$ is the associated period.

\begin{theorem} \label{general lambda}
Given a Markov graph  with Markov matrix $M$ and spectral radius $\lambda_M$, 

\begin{enumerate}

\item there exists positive integers $i$ and $p$ such that

\[\lim_{k \to \infty} \frac{1}{pk}\log [(M^{pk})_{i,i}]=\log \lambda_M;\]

\item for any $j$,

\[\limsup_{k \to \infty} \frac{1}{k}\log [(M^{k})_{j,j}] \leq \log \lambda_M.\]

\end{enumerate}
\end{theorem}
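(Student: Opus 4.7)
The plan is to chain the three preceding theorems together, using one bookkeeping observation about the block lower triangular form of $M$ to localize diagonal entries of $M^k$ to a single communicating class, and then to a single primitive block. The structural observation is this: because $M$ is block lower triangular with respect to the ordering $C_1 < C_2 < \dots < C_k$, any closed walk starting at a vertex $E_j' \in C_J$ must stay entirely inside $C_J$; going into a strictly higher indexed class forbids return (otherwise the classes would communicate), and stepping into a strictly lower indexed class is impossible by the ordering. Consequently $(M^n)_{j,j} = (M_J^n)_{j',j'}$, where $j'$ is the local index of $E_j'$ inside the block $M_J = A_J$.

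For part (1), I first apply the theorem on communicating classes to extract an irreducible diagonal block $M_I$ with spectral radius $\lambda_{M_I} = \lambda_M$. Letting $p$ denote its period, the Frobenius form theorem gives that $M_I^p$ is block diagonal with primitive blocks $D_1, \dots, D_p$, each of spectral radius $\lambda_M^p$. Pick any vertex $E_i' \in C_I$, say with $E_i' \in P_l$, and let $i''$ be the local index of $E_i'$ inside $D_l$. The structural observation gives $(M^{pk})_{i,i} = (M_I^{pk})_{i,i} = (D_l^k)_{i'',i''}$. Applying Theorem \ref{special lambda} to the primitive matrix $D_l$ yields $\lim_{k\to\infty}\tfrac{1}{k}\log (D_l^k)_{i'',i''} = \log \lambda_{D_l} = p\log\lambda_M$, and dividing by $p$ gives the claim.

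For part (2), fix any $j$ and let $C_J$ be its communicating class, so that $(M^k)_{j,j} = (M_J^k)_{j',j'}$. Since the characteristic polynomial factors as $\chi_M(t) = \prod_l \chi_{A_l}(t)$, the spectral radius $\lambda_{M_J}$ of $M_J$ is at most $\lambda_M$. Apply the Frobenius form theorem to $M_J$, with period $p_J$ and primitive diagonal blocks of spectral radius $\lambda_{M_J}^{p_J}$. The cyclic block form of $M_J$ forces $(M_J^k)_{j',j'} = 0$ whenever $p_J \nmid k$; along the subsequence $k = p_J m$ we have $(M_J^{p_J m})_{j',j'} = (D_{l_J}^m)_{j'',j''}$ for the primitive block $D_{l_J}$ containing the local index of $E_j'$. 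Theorem \ref{special lambda} then gives $\tfrac{1}{p_J m}\log(M_J^{p_J m})_{j',j'} \to \log \lambda_{M_J}$. The zero entries contribute $-\infty$ to the log and do not affect the $\limsup$, so $\limsup_{k\to\infty}\tfrac{1}{k}\log(M^k)_{j,j} = \log\lambda_{M_J} \leq \log\lambda_M$.

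The main obstacle is not the spectral analysis, which is essentially handed to us by the three preceding theorems, but rather the bookkeeping that pins down exactly which primitive block's diagonal entry equals $(M^{pk})_{i,i}$ in part (1) and $(M^k)_{j,j}$ in part (2). The localization of closed walks to a single communicating class, and then of $p$th powers to a single primitive block, is the step that requires care; in particular, it is essential in part (1) to pick $E_i'$ inside the large-spectral-radius block $M_I$ rather than an arbitrary vertex, since a generic $E_j'$ gives only the upper bound $\lambda_{M_J} \leq \lambda_M$ rather than equality.
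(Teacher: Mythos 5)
Your proof is correct and follows exactly the route the paper intends: the paper gives no written proof beyond the remark that the theorem is obtained by ``combining the previous three theorems,'' and your argument carries out precisely that combination, with the localization of closed walks to a single communicating class (and of $p$th powers to a single primitive block) supplying the bookkeeping the paper leaves implicit.
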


\section{Perron-Frobenius and periods of orbits of $L_f$}

Lemmas $1$ and $2$ relate closed walks in the $MG$ to periodic orbits of $L_f$. A closed walk in the $MG$ will belong to one irreducible component. So to analyze closed walks we can restrict attention to the irreducible components that correspond to the communicating classes. 

We will be interested in the minimum periods of the periodic points of $L_f$. These correspond to the lengths of closed walks that are not repetitive i.e. not the concatenation of a shorter walk. We will denote the set of lengths of non-repetitive closed walks of an $MG$ by $Len(MG)$.

In what follows we will often need to talk about sets of positive integers that contain {\em all but finitely many positive integers}. We will write $k$ is $abfmpi$ to mean that $k$ can be any positive integer, except possibly finitely many.

\begin{lemma}
Let $MG$ be an  Markov graph with Markov matrix $M$ that is primitive.
\begin{enumerate}
\item If the spectral radius of $M$ is $1$, then \[Len(MG)=\{1\}.\]
\item If the spectral radius of $M$ is greater than $1$, then \[Len(MG)=\{k | k \text{ is }abfmpi\}.\]

\end{enumerate}

\begin{proof}
Markov matrices have integer entries.  If the matrix $M$ has size $e \times e$ then the fact that $M$ is primitive means that there is some positive integer $k$ such that $M^k$ has no zero entries. This means that each entry of $M^k$ must be greater than or equal to $1$. Let $A$ be the $e \times e$ matrix with each entry equal to $1$. The spectral radius of $A$ is $e$. The spectral radius of $M^k$ must be at least that of $A$. So the spectral radius of $M$ is at least $e^{1/k}$. From this we can deduce that if the spectral radius of $M$ is $1$, then it must be the $1 \times 1$ matrix with $1$ as its only entry. Clearly in this case $Len(MG)=\{1\}$.

If the spectral radius of $M$ is greater than $1$, then there must be a $N$ such that $M^k_{1,1}\geq 2$ for all $k \geq N$. There must be at least two closed walks of length $k$ from $E_1^\prime$ to itself for each $k \geq N$. This implies that there must be two closed non-repetitive walks from and to $E_1^\prime$ with lengths that are relatively prime. Let their lengths be $n$ and $m$. A standard result from elementary number theory tells us that any integer greater than $mn$ can be written in the form $am+bn$ with both $a>0$ and $b>0$. We can construct a non-repetitive walk of length $am+bn$ by going around the non-repetitive walk of length $m$ a total of $a$ times and then going $b$ times around the non-repetitive walk of length $n$.

\end{proof}

\end{lemma}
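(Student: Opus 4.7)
For part (1), my plan is to combine primitivity with integrality to force $M = [1]$. If $M$ is $e \times e$ and primitive, there is some $k$ with $M^k$ strictly positive; since $M$ has integer entries, every entry of $M^k$ is at least $1$, so $M^k \geq J$ entrywise, where $J$ is the $e \times e$ all-ones matrix. Comparing spectral radii gives $\lambda_M^k = \lambda_{M^k} \geq \lambda_J = e$, hence $\lambda_M \geq e^{1/k}$. Setting $\lambda_M = 1$ forces $e = 1$, and the only $1 \times 1$ nonnegative integer matrix of spectral radius $1$ is $[1]$. The corresponding graph is a single loop, whose only non-repetitive closed walk has length $1$, so $Len(MG) = \{1\}$.

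For part (2), the plan is three steps. \emph{Step one:} Theorem \ref{special lambda} applied to the $(1,1)$-entry shows $(M^k)_{1,1}$ grows exponentially and in particular tends to infinity, so the set $L$ of closed walk lengths at $E_1'$ contains all sufficiently large integers. Since $L$ is a numerical sub-semigroup of $\mathbb{N}$ (closed under concatenation) and contains a cofinite set, $\gcd(L) = 1$.

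\emph{Step two:} Every closed walk at $E_1'$ factors uniquely as a concatenation of \emph{first-return walks} (walks that hit $E_1'$ only at their endpoints), so $L$ is generated as a semigroup by the first-return lengths, which therefore also have gcd $1$. Pick first-return walks $u, v$ of coprime lengths $m, n$. A first-return walk is automatically non-repetitive, since any decomposition $z^j$ with $j \geq 2$ would revisit $E_1'$ in the interior. Hence $m, n \in Len(MG)$, and the two-coin (Sylvester--Frobenius) theorem gives a representation $k = am + bn$ with $a, b \geq 1$ for every $k > mn$.

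\emph{Step three:} produce a non-repetitive closed walk of each sufficiently large length. The naive concatenation $u^a v^b$ need \emph{not} be non-repetitive -- divisibility coincidences between $a, b, m, n$ can introduce an internal period -- and this is the main obstacle. I would circumvent it by a counting argument rather than explicit construction. Theorem \ref{special lambda} yields $(M^k)_{1,1} \geq \lambda_M^{k(1-\varepsilon)}$ for $k$ large and any $\varepsilon > 0$, whereas any repetitive closed walk of length $k$ is $z^j$ with $j \mid k$ and $j \geq 2$, so the number of repetitive walks is at most $\sum_{j \mid k,\, j \geq 2} (M^{k/j})_{1,1} \leq k \cdot \lambda_M^{(k/2)(1+\varepsilon)}$. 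For $\varepsilon < 1/3$ and $k$ large the lower bound strictly dominates the upper bound, so at least one closed walk of length $k$ at $E_1'$ must be non-repetitive, placing $k$ in $Len(MG)$. Combining the three steps gives that $Len(MG)$ contains all but finitely many positive integers.
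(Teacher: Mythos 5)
Your part (1) is essentially identical to the paper's: primitivity plus integrality forces $\lambda_M \geq e^{1/k}$, hence $e=1$ and $M=[1]$. For part (2) you diverge from the paper at exactly the step you flag as the main obstacle, and your route is arguably the more careful one. The paper also starts from $(M^k)_{1,1}\geq 2$ for large $k$, extracts two non-repetitive closed walks at $E_1'$ of coprime lengths $m,n$, writes $k=am+bn$, and then simply \emph{asserts} that the concatenation $u^a v^b$ is non-repetitive; no justification is given that the concatenation cannot acquire an internal period (one would need something like the Lyndon--Sch\"utzenberger/Fine--Wilf machinery, with edge cases when $a=1$ or $b=1$, to make that airtight). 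You instead bypass the construction entirely with a counting argument: every repetitive closed walk of length $k$ at $E_1'$ is a $j$-th power of a closed walk of length $k/j$, so the repetitive ones number at most $\sum_{j\mid k,\,j\geq 2}(M^{k/j})_{1,1}\lesssim k\,\lambda^{k/2+o(k)}$, while Theorem \ref{special lambda} (together with positivity of the limiting matrix $H$ for a primitive $M$) gives $(M^k)_{1,1}\gtrsim \lambda^{k-o(k)}$; since $\lambda>1$ the total dominates for large $k$, so a non-repetitive closed walk of length $k$ exists. This buys rigor at the delicate step at the cost of being non-constructive. Your Step two (first-return decomposition, coprime first-return lengths) is correct but is not actually needed once Step three is in place, since the counting argument alone covers all sufficiently large $k$; it mainly serves to mirror the paper's number-theoretic framing. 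One small point of care: your upper bound on the number of repetitive walks needs a uniform estimate $(M^d)_{1,1}\leq C\lambda^d$ over all $d\leq k/2$, not just the pointwise limit of Theorem \ref{special lambda}; this follows from the convergence of $(\lambda^{-1}M)^k$ to $H$, so the gap is cosmetic rather than substantive.
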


\begin{lemma}

Let $C_I$ denote a communicating class with corresponding  Markov graph $MG_I$ and Markov matrix $M_I$. Let $p$ denote the period of $C_I$. 

\begin{enumerate}
\item If the spectral radius of of $M_I$ equals $1$, then \[Len(MG_I)=\{p\}.\]
\item If the spectral radius of of $M_I$ is greater than $1$, then \[Len(MG_I)=\{kp | k \text{ is }abfmpi\}.\]

\end{enumerate}

\end{lemma}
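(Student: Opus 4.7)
The plan is to reduce the irreducible case to the primitive case handled in the previous lemma by exploiting the Frobenius form decomposition of Theorem 2. The key input is that $M_I^p$ is block-diagonal with primitive blocks $D_1, \ldots, D_p$, each corresponding to a distance mod $p$ class $P_i$, and each $D_i$ has spectral radius $\lambda_{M_I}^p$.

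First, since $p$ is the greatest common divisor of the lengths of closed walks in $MG_I$, every such length is divisible by $p$, giving the containment $Len(MG_I) \subseteq \{kp : k \geq 1\}$. Next, for each $i$, let $MG(D_i)$ denote the directed multigraph on the vertices of $P_i$ whose adjacency matrix is $D_i$, so that the edges from $E_m^\prime$ to $E_j^\prime$ in $MG(D_i)$ are in bijection with the length-$p$ walks in $MG_I$ from $E_m^\prime$ to $E_j^\prime$. A closed walk of length $kp$ in $MG_I$ based at a vertex of $P_i$ then corresponds, by reading off every $p$-th vertex, to a closed walk of length $k$ in $MG(D_i)$, and this correspondence is a bijection. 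I would then verify that it preserves non-repetitiveness: if an $MG_I$-walk is the concatenation of $m$ copies of a closed sub-walk of length $kp/m$, then $kp/m$ is itself a closed walk length, hence divisible by $p$, so $m$ divides $k$ and the associated $MG(D_i)$-walk is the $m$-fold concatenation of a length-$(k/m)$ sub-walk; the reverse direction is immediate.

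Finally, I would apply Lemma 3 to each primitive block $D_i$. If $\lambda_{M_I}=1$, then each $\lambda_{D_i}=1$, so $Len(MG(D_i))=\{1\}$ and hence $Len(MG_I)=\{p\}$. If $\lambda_{M_I}>1$, then each $\lambda_{D_i}>1$, so each $Len(MG(D_i))$ contains all but finitely many positive integers, and taking the union over $i$ yields $Len(MG_I)=\{kp : k \text{ is } abfmpi\}$.

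The main obstacle is the bookkeeping in the non-repetitiveness argument, which requires unpacking what it means for a length-$kp$ walk to factor as a repeated concatenation and then invoking the arithmetic constraint that any such factor length must be a multiple of $p$. Once that verification is in hand, both cases drop out of Lemma 3 and the spectral identity $\lambda_{D_i} = \lambda_{M_I}^p$ supplied by Theorem 2.
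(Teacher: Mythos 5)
Your proposal is correct and follows essentially the same route as the paper: reduce to the primitive blocks $D_i$ of $M_I^p$ via the Frobenius form, apply the previous lemma to each block using $\lambda_{D_i}=\lambda_{M_I}^p$, and use divisibility by $p$ to pin down the possible lengths. The only difference is that you spell out the bijection between closed walks of length $kp$ in $MG_I$ and closed walks of length $k$ in the block graphs, including the preservation of non-repetitiveness, a detail the paper leaves implicit.
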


\begin{proof}
As noted in the previous section $M_I^k$ has zeros down the main diagonal when $k$ is not a multiple of $p$. Consequently there are no closed walks of lengths that are not multiples of $p$.

We also know \[M_I^p=\left [ \begin{matrix} 
D_1 &0 &0  &\dots &0\\
0 &D_2&0  &\dots &0\\
 0&0&D_3&\dots & 0\\
\vdots &\vdots &\vdots &\ddots &\vdots\\
0&0&0&\dots &D_p\\

\end{matrix} \right ] \]
 where the $D_i$ are the matrices corresponding to the distance classes. The $D_i$ are all primitive and have the same spectral radius.

If the spectral radius of of $M_I$ equals $1$ then $M_I^p$ must be the $p \times p$ identity matrix. Consequently $M_I$ is a permutation matrix. Up to starting and ending vertex there is only one non-repetitive closed walk  in $MG_I$ and it has length $p$.

If the spectral radius of $M_I$ is greater than $1$, then the spectral radii of all the $D_i$ are all greater than $1$. The $D_i$ are primitive so the previous lemma and the fact that the $D_i$ come from $M_I^p$  tell us that $Len(MG_I)=\{kp | k \text{ is all but finitely many positive integers}\}$.

\end{proof}

The periodic points of $L_f$ are described by the closed walks in the $MG$. The only exception are the vertices of $G$. These may be periodic and not correspond to a closed walk. Combining all the above information gives:

\begin{theorem} Let $G$ be a graph with linearized vertex map $L_f:G \to G$. Let $Per(L_f)$ denote the set of minimal periods of periodic points of $L_f$. Then there exist positive integers $p_1, p_2 \dots, p_r$ and $q_1, q_2, \dots, q_s$ such that \[Per(L_f)=\left(\bigcup_{i=1}^s\{kq_i | k \text{ is }abfmpi\}\right) \cup \{p_1, p_2 \dots p_r\}  \]

If the spectral radius of its Markov matrix is $1$, then  \[Per(L_f)= \{p_1, p_2 \dots p_r\} . \]
\end{theorem}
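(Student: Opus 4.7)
The plan is to combine the two preceding lemmas on $Len(MG_I)$ with Lemma 2 (which converts closed walks into periodic points) and to treat the vertices separately.

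First, I would decompose $MG$ into its communicating classes $C_1,\dots,C_k$. Every closed walk stays inside a single class, so $Len(MG)=\bigcup_{I} Len(MG_I)$. By the previous lemma, each $Len(MG_I)$ is either the singleton $\{p_I\}$ (when $\lambda_{M_I}=1$) or of the form $\{k p_I : k\text{ is }abfmpi\}$ (when $\lambda_{M_I}>1$). Next I would use Lemma 2 to pass between closed walks and periodic points: a closed walk of length $d$ produces a point $x$ with $L_f^d(x)=x$ whose minimal period divides $d$, and conversely every non-vertex periodic orbit of minimal period $m$ produces a closed walk of length $m$ in $MG$. Vertex orbits must be tracked separately, but since $G$ has only finitely many vertices these contribute only finitely many values to $Per(L_f)$, which will be absorbed into the finite list $\{p_1,\dots,p_r\}$.

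The delicate step is extracting genuine minimal periods in the $\lambda_{M_I}>1$ case, since Lemma 2 only gives a point of period dividing $d$, not equal to $d$. For a fixed edge $E_i'$ in such a component, the count $(M_I^{k p_I})_{i,i}$ of fixed points of $L_f^{k p_I}$ that lie in $E_i$ grows like $\lambda_{M_I}^{k p_I}$ by Theorem \ref{special lambda}, while the number of such fixed points whose minimal period is a \emph{proper} divisor of $kp_I$ is bounded by $\sum_{d\mid k p_I,\,d<k p_I}(M_I^d)_{i,i}$, which is dominated by the leading term for $k$ large. Hence for all but finitely many $k$ there must exist a point of minimal period exactly $k p_I$, so this communicating class contributes a set of the form $\{k p_I : k\text{ is }abfmpi\}$ to $Per(L_f)$, up to finitely many exceptions, which we again absorb into $\{p_1,\dots,p_r\}$.

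Collecting all the pieces, I would set the $q_i$ equal to the periods of the communicating classes whose spectral radius exceeds $1$, and let $\{p_1,\dots,p_r\}$ record every remaining finite contribution: vertex-orbit periods, the singletons $\{p_I\}$ coming from classes with $\lambda_{M_I}=1$, and the finitely many exceptional values omitted from the arithmetic progressions. This yields the claimed decomposition. Finally, when $\lambda_M=1$ every $\lambda_{M_I}$ is at most $1$, so no class contributes an infinite progression and $Per(L_f)=\{p_1,\dots,p_r\}$ is finite. The main obstacle, as noted, is the minimal-period extraction in the third paragraph; everything else is a direct application of the structure theorems already stated.
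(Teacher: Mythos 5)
Your argument is correct, and its skeleton matches the paper's: decompose $MG$ into communicating classes, invoke the two preceding lemmas describing $Len(MG_I)$ as either $\{p_I\}$ or $\{kp_I : k \text{ is } abfmpi\}$, and absorb the finitely many vertex orbits and exceptional small periods into the finite list. Where you genuinely depart from the paper is at the step you rightly flag as delicate. The paper gets exact minimal periods combinatorially: $Len(MG_I)$ is by definition the set of lengths of \emph{non-repetitive} closed walks, the lemma manufactures such walks of every sufficiently large multiple of $p_I$ via the $am+bn$ construction, and a non-repetitive closed walk of length $d$ produces a non-vertex point whose itinerary is that walk and hence cannot have a shorter period. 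You instead run a Perron--Frobenius counting argument: the fixed points of $L_f^{kp_I}$ in $E_i$ number roughly $(M_I^{kp_I})_{i,i}\sim\lambda^{kp_I}$, while those of strictly smaller minimal period are at most $\sum_{d\mid kp_I,\,d<kp_I}(M_I^d)_{i,i}$ plus the finitely many vertices, and since every proper divisor is at most $kp_I/2$ this is $o(\lambda^{kp_I})$ when $\lambda>1$. That trade is a reasonable one: your route avoids having to verify that the concatenation $W_1^aW_2^b$ is itself non-repetitive (a point the paper asserts rather than proves), at the cost of one detail you should make explicit --- the bound of at most one fixed point per length-$d$ subinterval requires the slope of $L_f^d$ there to differ from $+1$, which follows from the paper's Lemma 4 because a slope $\pm 1$ closed walk in an irreducible component with $\lambda_{M_I}>1$ would force $M_I$ to be a permutation matrix. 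With that noted, both inclusions of the claimed set equality and the $\lambda_M=1$ case are handled correctly.
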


\section{Dynamics and Perron-Frobenius Theory}

There are some concepts in dynamical systems that are closely related to the ideas of a matrix being irreducible or primitive. In this section we compare the various concepts.
\begin{definition}

Let $X$ be a compact space, and let $f:X \to X$ be continuous. The map $f$ is {\em topologically transitive} if for every pair of non-empty open sets $U$ and $V$ in $X$, there is a positive integer $k$ such that $f^k(U) \cap V$ is non-empty.

\end{definition}

\begin{definition}

Let $X$ be a compact space, and let $f:X \to X$ be continuous. The map $f$ is {\em topologically mixing} if for every pair of non-empty open sets $U$ and $V$ in $X$, there is a positive integer $N$ such that $f^k(U) \cap V$ is non-empty for every $k \geq N$.

\end{definition}


The following lemma follows easily from the above definitions.

\begin{lemma} Given a graph $G$ and linearized map $L_f$ then:

\begin{enumerate}
\item $M(L_f)$ is non-negative.
\item If $L_f$ is transitive, then $M(L_f)$ is irreducible.
\item If $L_f$ is topologically mixing, then $M(L_f)$ is primitive.
\end{enumerate}
\end{lemma}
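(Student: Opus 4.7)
My plan is as follows. Part (1) is immediate from the definition: each entry $M_{ij}$ is by construction the number of directed edges from $E_j'$ to $E_i'$ in $MG$, so $M_{ij}$ is a non-negative integer. For parts (2) and (3) I will reduce both claims to one structural observation drawn from Lemma 3. Specifically, Lemma 3(2) says that $(M^k)_{ij}$ counts closed subintervals of $E_j$, with disjoint interiors, whose image under $L_f^k$ is exactly $E_i$. Ranging over all $i$, these subintervals partition $E_j$ into finitely many closed sub-pieces, one for each $k$-step walk from $E_j'$ in $MG$, and the boundary points of this partition are precisely those $x \in E_j$ such that $L_f^l(x)$ is a vertex of $G$ for some $0 \leq l \leq k$.

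To prove (2), I fix any pair $i,j$ and apply transitivity to $U = \text{int}(E_j)$ and $V = \text{int}(E_i)$ to obtain $k \geq 1$ and a point $x \in U$ with $L_f^k(x) \in V$. Because $L_f$ permutes the vertices, any preimage of a vertex under any iterate is sent by $L_f^k$ to a vertex; since $L_f^k(x) \in \text{int}(E_i)$ is not a vertex, $x$ cannot be a boundary point of the partition. Hence $x$ lies in the interior of a unique sub-piece $W$, and $L_f^k$ maps $W$ onto some single edge $E_{i'}$. Because $L_f^k(x) \in E_{i'} \cap \text{int}(E_i)$ and distinct edges meet only at vertices, $E_{i'} = E_i$. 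Hence $(M^k)_{ij} \geq 1$, giving irreducibility.

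For (3) I re-run exactly this argument with topological mixing in place of transitivity: for each ordered pair $(i,j)$, mixing provides some $N_{ij}$ with $L_f^k(\text{int}(E_j)) \cap \text{int}(E_i) \neq \emptyset$ for every $k \geq N_{ij}$, and the argument from (2) upgrades each such $k$ to $(M^k)_{ij} \geq 1$. Since $G$ has only finitely many edges, $N := \max_{i,j} N_{ij}$ is finite, so every entry of $M^k$ is positive whenever $k \geq N$, which is primitivity. The only real obstacle is bridging the topological hypothesis (one orbit point lands inside an edge) to the combinatorial conclusion (a whole subinterval of $E_j$ covers $E_i$); the partition observation is precisely what bridges it, since any interior point of $E_j$ whose $L_f^k$-image lies in the interior of $E_i$ must sit inside a sub-piece whose $L_f^k$-image is all of $E_i$.
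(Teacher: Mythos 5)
Your proof is correct, and it fills in exactly the argument the paper leaves implicit (the paper offers no proof, stating only that the lemma ``follows easily from the above definitions''). The key bridge you supply --- that a point of $\mathrm{int}(E_j)$ whose $L_f^k$-image lies in $\mathrm{int}(E_i)$ must sit inside a subinterval mapping exactly onto $E_i$, since no iterate of that point can hit a vertex while $L_f$ permutes the vertex set --- is the natural and intended one, and the passage from transitivity to irreducibility and from mixing to primitivity (via finiteness of the edge set) is handled correctly.
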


The converses of the second two statements are not true in general, but they are if we make some minor modifications to the hypotheses.  If $M(L_f)$ is irreducible and is not a permutation matrix, then $L_f$ is transitive. If $M(L_f)$ is primitive and $G$ has more than one edge, then $L_f$ is topologically mixing. We prove these two statements as Theorems \ref{top trans} and {\ref{top mix}. First we prove a lemma concerning walks with slope of either plus or minus one.

\begin{lemma}
Let $G$ be a graph with linearized map $L_f$. If $M(L_f)$ is irreducible and there exists a closed walk in the  Markov graph with slope $\pm 1$, then $M(L_f)$ is a permutation matrix.
 
\end{lemma}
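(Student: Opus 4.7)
The plan is to exploit the integer-valued nature of edge moduli. The slope of the given closed walk is $\pm\prod_{k=1}^{d} mods(L_f, E_{i_k})$, with each factor a positive integer; hence slope $=\pm 1$ forces $mods(L_f, E_{i_k})=1$ for every $k$. By the definition of $mods$, this says that $L_f$ sends the whole edge $E_{i_k}$ linearly onto the single edge $E_{i_{k+1\bmod d}}$.

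Next I would translate this rigidity onto $M=M(L_f)$. The column sum of the $j$th column of $M$ equals $mods(L_f,E_j)$, since $L_f$ partitions $E_j$ into $mods(L_f,E_j)$ subintervals, each mapping onto exactly one edge of the image path. Therefore column $i_k$ has a single $1$ (in row $i_{k+1\bmod d}$) and zeros elsewhere. Equivalently, in $MG$ the vertex $E_{i_k}^\prime$ has exactly one outgoing arc, and it goes to $E_{i_{k+1}}^\prime$.

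Now I would invoke irreducibility. Forward walks in $MG$ starting at $E_{i_0}^\prime$ are forced to follow the deterministic chain $E_{i_0}^\prime\to E_{i_1}^\prime\to E_{i_2}^\prime\to\cdots$, so the set of vertices reachable from $E_{i_0}^\prime$ equals the set of distinct vertices appearing along the walk, namely $\{E_{i_0}^\prime,\dots,E_{i_{p-1}}^\prime\}$, where $p$ is the smallest positive integer with $E_{i_p}^\prime=E_{i_0}^\prime$ (so $p\mid d$). Irreducibility of $M$ forces every vertex of $MG$ to be reachable from $E_{i_0}^\prime$, so $MG$ has exactly these $p$ vertices and $M$ is the $p\times p$ $(0,1)$-matrix of the cyclic permutation $E_{i_k}^\prime\mapsto E_{i_{k+1\bmod p}}^\prime$, which is a permutation matrix.

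I expect the main obstacle to be in making this third step precise: one must check that when the walk revisits vertices, so that the true cyclic period is a proper divisor of $d$, the successor map $E_{i_k}\mapsto E_{i_{k+1}}$ is still well-defined on the edge set. This follows from the fact that $L_f$ is itself a function, together with the single-edge image property $L_f(E_{i_k})=E_{i_{k+1\bmod d}}$, so any two occurrences of the same edge in the walk must be followed by the same successor; only then does the ``cycle'' assembled from the walk collapse cleanly onto the permutation structure claimed.
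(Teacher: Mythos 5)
Your proposal is correct and takes essentially the same route as the paper's (very terse, two-sentence) proof: slope $\pm 1$ forces every modulus along the walk to equal $1$, so the edges appearing in the walk form a set cyclically permuted by $L_f$, and irreducibility forces that set to be all of the edges. You have simply made explicit the supporting details (integrality of the moduli, the column-sum identity, and the reachability argument) that the paper leaves to the reader.
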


\begin{proof}
Since there is a closed walk with slope $\pm 1$ there is a set of edges that gets permuted by $L_f$. Since $M(L_f)$ is irreducible, this set must include every edge.
\end{proof}

\begin{theorem} \label{top trans}
Given a graph $G$ and linearized map $L_f$. If $M(L_f)$ is irreducible and not a permutation matrix, then $L_f$ is topologically transitive.

\end{theorem}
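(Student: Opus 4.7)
The plan is to show that for any non-empty open sets $U, V \subseteq G$ there is some $n$ with $L_f^n(U)\cap V \neq\emptyset$. After shrinking, we may assume that $U$ is an open subinterval of positive length $\epsilon$ lying in the interior of a single edge $E_j$ and that $V$ contains an open subinterval of some edge $E_i$. The restriction $L_f^n|_{E_j}$ is piecewise linear with breakpoints at the preimages of the vertices of $G$; between consecutive breakpoints it maps linearly and bijectively onto a single edge. By Lemma~3 these maximal linear pieces are in one-to-one correspondence with walks of length $n$ in $MG$ that start at $E_j'$, and the piece corresponding to a walk $w=E_{j_0}'E_{j_1}'\cdots E_{j_n}'$ (with $j_0=j$) has length $1/|s_w|$, where $|s_w|=\prod_{k=0}^{n-1}mods(L_f,E_{j_k})$.

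The central step is to show $\max_w 1/|s_w|\to 0$ as $n\to\infty$. Let $S=\{E:mods(L_f,E)=1\}$. For any $E\in S$ the image $L_f(E)$ is a single edge, so there is a unique outgoing $MG$-step from $E'$, landing at $(L_f(E))'$, and it contributes a factor of $1$ to $|s_w|$. Hence a length-$n$ walk of slope $\pm 1$ that starts at $E\in S$ is forced to be the forward $L_f$-orbit of $E$; if this orbit never leaves $S$, finiteness of $S$ makes it eventually periodic, producing a closed walk of slope $\pm 1$ and contradicting the previous lemma (since $M(L_f)$ is not a permutation matrix). Thus there is a uniform $T<\infty$ such that every maximal run of consecutive $S$-edges inside any walk has length at most $T$. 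If a length-$n$ walk has $q$ edges outside $S$ among its first $n$ edges, then at most $q+1$ such $S$-runs of total length at most $T(q+1)$ flank them, giving $n\leq q+T(q+1)$ and $q\geq (n-T)/(T+1)$. Since each non-$S$ edge contributes a factor $\geq 2$ to $|s_w|$, we obtain $|s_w|\geq 2^{(n-T)/(T+1)}\to\infty$.

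Choose $n$ large enough that every piece has length $<\epsilon/3$; a breakpoint then lies in the middle third of $U$, and both its neighbouring breakpoints lie in $U$ as well, so $U$ entirely contains some piece $J$ with $L_f^n(J)=E_l$ for some edge $E_l$. By irreducibility of $M(L_f)$ there is a walk of some length $d$ in $MG$ from $E_l'$ to $E_i'$, providing a subinterval $K\subseteq E_l$ with $L_f^d(K)=E_i$. Therefore $L_f^{n+d}(U)\supseteq L_f^d(E_l)\supseteq E_i$, and because $V$ contains an open subinterval of $E_i$ the intersection $L_f^{n+d}(U)\cap V$ is non-empty. I expect the main obstacle to be the slope-growth estimate of the middle paragraph: converting the mere absence of slope-$\pm 1$ closed walks into a \emph{geometric} decay of piece lengths is what allows a piece to fit inside the fixed open set $U$ rather than just inside the ambient edge $E_j$.
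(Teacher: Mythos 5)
Your proof is correct, but it takes a genuinely different route from the paper's. The paper argues qualitatively: it takes a half-open subinterval $I$ anchored at a vertex (splitting into cases according to whether $U$ contains a vertex), shows that some image $L_f^M(I)$ must contain a vertex in its interior --- since otherwise the itinerary of $I$ would eventually trace a closed walk of slope $\pm 1$, which the preceding lemma forbids for an irreducible non-permutation matrix --- and then observes that a path with a vertex endpoint and a vertex in its interior contains a whole edge, after which irreducibility finishes the job. You instead prove a uniform expansion estimate: using the same key lemma, you bound the length of any run of modulus-one edges in a walk by a uniform $T$ (a run inside $S$ is forced, so an unbounded run would close up into a slope-$\pm 1$ cycle), deduce that every length-$n$ branch of $L_f^n$ has slope of modulus at least $2^{(n-T)/(T+1)}$, and conclude that the Markov pieces shrink geometrically so that any open set eventually contains an entire piece mapping onto a full edge. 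Your version is longer but buys more: it avoids the paper's case analysis on whether $U$ contains a vertex, it yields a quantitative expansion rate, and it shows $L_f^n(U)$ contains a full edge for \emph{all} sufficiently large $n$ (the maximal piece length is non-increasing), which would feed directly into the topological mixing theorem as well. Two small points worth making explicit: irreducibility forces every edge to have modulus of slope at least $1$ (a collapsed edge would give a zero column of $M$), which is what guarantees that every non-$S$ edge appearing in a walk contributes a factor of at least $2$ rather than $0$; and the identification of maximal linear pieces of $L_f^n|_{E_j}$ with length-$n$ walks out of $E_j'$ deserves a one-line induction, though it is exactly the content of Lemma~3.
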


\begin{proof}

Let $I$ denote a subinterval of an edge. The subinterval could be open, half-open or closed. Let $a$ and $b$ denote the endpoints of the subinterval. The orientation of the edge gives an orientation of the subinterval. We assume without loss of generality that $a<b$. For each $k \geq 0$, the subinterval gives a path on the graph from $L_f^k(a)$ to $L_f^k(b)$. We denote this path as $L_f^k(I)$. 

First consider the case where for every integer $k \geq 0$, the path $L_f^k(I)$ does not contain a vertex in its interior. It must be the case that for each integer $k \geq 0$, the image $L_f^k(I)$ is contained in exactly one edge. It must also be the case that this sequence of edges is eventually periodic. This corresponds to a closed walk in the  Markov graph with slope of either $1$ or $-1$. The previous lemmas rules out both cases. So for any subinterval $I$ of an edge there must be an $M$ such that the path $L_f^M(I)$ contains a vertex in its interior.

Given any open set $U$ contained in $G$, we will show that for any edge $E$ there exists an $N$ with  $E \subseteq L^N_f(U)$. This is enough to show that $L_f$ is transitive as any open $V$ must intersect an edge.

There are two cases to consider. First consider the case when $U$ contains a vertex. Denote it by $V_1$. In this case we can find a half-closed subinterval $I$ such that $I \subseteq U$, $I$ has $V_1$ the closed endpoint and such that $I$ lies entirely within one edge. By the argument above we know that the  there must be an $M$ such that the path $L_f^M(I)$ contains a vertex in its interior. The path $L_f^M(I)$ also contains the vertex $L_f^M(V_1)$ as an endpoint. Since $L_f$ is linear on edges, it must be the case that $L_f^M(I)$ contains an edge of $G$. Since $M(L_f)$ is irreducible, this image of this edge under some iteration of $L_f$ must contain $E$.

If $U$ does not contain a vertex, then there is some open subinterval $J \subseteq U$ such that $J$ is contained within one edge. By the argument above we know that there must be an $M$ such that the path $L_f^M(J)$ contains a vertex in its interior. Again, we will denote it $V_1$. Let $I$ denote a half-closed subinterval such that $I \subseteq L_f^M(J)$, $I$ has $V_1$ as the closed endpoint and such that $I$ lies entirely within one edge. Repeating the argument in the previous paragraph completes the proof.

\end{proof}

\begin{theorem}\label{top mix}

Given a graph $G$ with more than one edge and linearized map $L_f$, if $M(L_f)$ is primitive then $L_f$ is topologically mixing.

\end{theorem}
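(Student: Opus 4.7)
The plan is to combine the ``$L_f^{M_1}(U)$ contains an entire edge'' observation from the proof of Theorem \ref{top trans} with the primitivity of $M(L_f)$ to get the uniform-in-$k$ conclusion required for mixing.

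First, note that since $G$ has more than one edge, $M(L_f)$ is not a permutation matrix (a permutation matrix of size $\geq 2$ is never primitive, because its powers are again permutation matrices and hence have zero entries). So the hypotheses of Theorem \ref{top trans} apply, and in particular the key sub-step of that proof applies: for any non-empty open $U \subseteq G$, there exists an integer $M_1 \geq 0$ and an edge $E$ of $G$ such that $E \subseteq L_f^{M_1}(U)$. I would invoke this step verbatim rather than reprove it.

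Next, I would use primitivity directly. By definition, there is an $N_0$ such that for every $k \geq N_0$, every entry of $M(L_f)^k$ is positive. Translating via the Markov-matrix lemma, this says that for every edge $E_i$ of $G$, there is a closed subinterval of $E$ whose image under $L_f^k$ is exactly $E_i$, and hence $L_f^k(E) \supseteq E_i$ for every edge $E_i$. In other words, $L_f^k(E) = G$ for all $k \geq N_0$.

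Now let $V \subseteq G$ be any non-empty open set. Since the interiors of the edges cover $G$ up to finitely many vertices, $V$ contains some non-empty open subset of the interior of some edge $E'$; in particular $V \cap E' \neq \emptyset$. For any $k \geq M_1 + N_0$, setting $j = k - M_1 \geq N_0$,
\[
L_f^k(U) \;=\; L_f^{j}\bigl(L_f^{M_1}(U)\bigr) \;\supseteq\; L_f^{j}(E) \;\supseteq\; E',
\]
so $L_f^k(U) \cap V \supseteq E' \cap V \neq \emptyset$. Taking $N = M_1 + N_0$ gives the mixing conclusion.

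The only real obstacle is confirming that the ``$L_f^{M_1}(U)$ contains an entire edge'' step from the proof of Theorem \ref{top trans} genuinely applies in the primitive setting; once that is granted, the rest is a short deduction since primitivity upgrades ``eventually reaches one edge'' to ``eventually covers all edges,'' which is precisely the strengthening from transitive to mixing.
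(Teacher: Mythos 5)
Your proof is correct and follows essentially the same route as the paper's: first extract from the proof of Theorem \ref{top trans} that some iterate of $U$ contains a whole edge, then use primitivity to upgrade that edge to all of $G$ for every sufficiently large iterate. You are in fact more careful than the paper on two points it leaves implicit --- checking that primitivity plus ``more than one edge'' rules out the permutation-matrix case so that the transitivity argument applies, and making the ``for all $k \geq N$'' uniformity explicit --- but the underlying argument is identical.
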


\begin{proof}

Given any open $U$ in $G$, the previous proof shows that for any edge $E$ there exists an $N$ with  $E \subseteq L^N_f(U)$. Since $M(L_f)$ is primitive there exits an $M$ such that $L_f^M(E)=G$.

\end{proof}

\section{Topological entropy and horseshoes}

Let $f:G \to G$ be a vertex map and $L_f$ its linearization. In the previous section we considered the cases when $L_f$ was topologically transitive and topologically mixing. However, these facts are not related to whether or not $f$ is topologically transitive or mixing. In this section we use information from the Markov matrix to obtain information about horseshoes and topological entropy. This will give information, not only about $L_f$, but about $f$.

\begin{definition}

Let $f: G \to G$ be a vertex map. Let $k$ and $n$ be positive integers. We say that an edge $E$ in $G$ has a $n$-horseshoe under $f^k$ if there exist non-empty, closed subintervals with pairwise disjoint interiors $J_1, J_2, \dots, J_n$ such that \[(J_1 \cup \dots \cup J_n) \subseteq (f^k(J_1)\cap \dots \cap f^k(J_n))\]

\end{definition}

Horseshoes of $L_f$ are closely related to the diagonal entries of powers of $M(L_f)$. The observation before Lemma $2$ shows that if an edge $E$ has an $n$-horseshoe under $L_f^k$, then $E$ will have  $n$-horseshoe under $f^k$. It should be noted that the converse is not true. It is possible for $f$ to have horseshoes that are eliminated by the linearization.

The following lemma is an immediate consequence of Lemmas $2$ and $3$.

\begin{lemma} \label{horseshoes}
Let $f:G \to G$ be a vertex map. Let $M$ be the Markov matrix of its linearization. For any $k$, if $(M^k)_{i,i}$ is positive, then the edge $E_i$ has a $(M^k)_{i,i}$-horseshoe under $f^k$.

\end{lemma}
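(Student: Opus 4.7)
The plan is to chain Lemmas 2 and 3 together. Set $n=(M^k)_{i,i}$. By Lemma 3 part (1), there are exactly $n$ closed walks of length $k$ in $MG$ starting and ending at $E_i'$; list them as $W_1,\dots,W_n$, where $W_m = E_{i_{m,0}}'E_{i_{m,1}}'\cdots E_{i_{m,k}}'$ with $E_{i_{m,0}}=E_{i_{m,k}}=E_i$. By Lemma 3 part (2) (taking $j=i$) these walks correspond to $n$ closed subintervals $J_1,\dots,J_n$ of $E_i$ with pairwise disjoint interiors, each satisfying $L_f^k(J_m)=E_i$ and $L_f^s(J_m)\subseteq E_{i_{m,s}}$ for $0\le s\le k$. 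This already exhibits an $n$-horseshoe of $E_i$ under $L_f^k$.

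To pass from $L_f$ to $f$, I would apply Lemma 2 part (1) to each of the $n$ walks $W_m$ separately. For each $m$ this produces a closed subinterval $I_m\subseteq E_i$ with $f^s(I_m)\subseteq E_{i_{m,s}}$ for $0\le s\le k$ and $f^k(I_m)=E_i$. Combined, the family $\{I_1,\dots,I_n\}$ gives the equalities $(I_1\cup\cdots\cup I_n)\subseteq (f^k(I_1)\cap\cdots\cap f^k(I_n)) = E_i$ required by the definition of an $n$-horseshoe; what remains is the disjointness of interiors.

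The disjointness is the main obstacle, because Lemma 2 merely asserts existence of $I_m$ walk-by-walk and does not by itself distinguish the subintervals obtained from different walks. The cleanest way I would argue it is via itineraries: for any $x$ in the interior of $I_m$ whose forward $f$-orbit $x,f(x),\dots,f^{k-1}(x)$ avoids the finite vertex set, the edges containing $f^s(x)$ are uniquely determined, and by construction this edge-itinerary equals the walk $W_m$. The exceptional points (where some $f^s(x)$ is a vertex) form a finite union of preimages of finitely many vertices under continuous maps, hence are nowhere dense in $I_m$, provided $f^k$ does not collapse a subinterval of $I_m$ to a point; and it cannot, because $f^k(I_m)=E_i$ together with connectedness of images forces $f^s$ to be nonconstant on every open subinterval of $I_m$ (otherwise $f^k$ of that subinterval would be a single point, contradicting surjectivity onto $E_i$ at an open neighborhood). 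If the interiors of $I_m$ and $I_{m'}$ overlapped for $m\ne m'$, then a generic point in their common interior would be forced to have itinerary equal to both $W_m$ and $W_{m'}$, contradicting $W_m\ne W_{m'}$. Hence the $I_m$ have pairwise disjoint interiors and together witness the $n$-horseshoe of $E_i$ under $f^k$.
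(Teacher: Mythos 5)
Your overall route---Lemma 3 to produce the $n=(M^k)_{i,i}$ pairwise disjoint subintervals for $L_f^k$, then Lemma 2 to transfer the picture to $f^k$---is exactly the paper's (the paper gives no written proof, declaring the lemma an immediate consequence of Lemmas 2 and 3 together with the observation preceding Lemma 2), and you are right that the one nontrivial point is the pairwise disjointness of the intervals $I_1,\dots,I_n$ obtained for $f$. But your itinerary argument for that disjointness has a genuine gap. The entry $M_{i,j}$ counts \emph{directed edges} from $E_j'$ to $E_i'$, and $MG$ can have several parallel edges between the same pair of vertices---this happens precisely when $E_j$ covers $E_i$ more than once under $L_f$. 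Consequently two distinct walks $W_m\neq W_{m'}$ counted by $(M^k)_{i,i}$ can have identical vertex sequences, hence identical edge itineraries in $G$; they differ only in which parallel $MG$-edge they use at some step. Your contradiction ``a generic common point would have itinerary equal to both $W_m$ and $W_{m'}$'' then evaporates, since a single itinerary is consistent with both walks. The simplest failing case is already $k=1$ with $M_{i,i}=2$: both length-one closed walks at $E_i'$ have itinerary $(E_i,E_i)$, and nothing in your argument prevents the two applications of Lemma 2 from returning the very same interval.

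The disjointness has to come instead from the homotopy between $f$ and $L_f$ on each edge, one covering step at a time: if $L_f$ maps $m$ pairwise disjoint closed subintervals of $E_j$ each exactly onto $E_i$, then, because $f|_{E_j}$ is homotopic to $L_f|_{E_j}$ rel endpoints, the path $f(E_j)$ also traverses $E_i$ at least $m$ times, yielding $m$ pairwise disjoint closed subintervals of $E_j$ each mapped by $f$ exactly onto $E_i$; one then composes these coverings along the walk, separating two walks at the first step where they use different directed $MG$-edges (not merely different vertices) and pulling that separation back. This is the standard chain-of-coverings argument in \cite{ALM}. A secondary flaw: your claim that $f^k(I_m)=E_i$ forces $f^s$ to be nonconstant on every open subinterval of $I_m$ is false---$f$ may collapse a small piece of $I_m$ to a point while the remainder still covers $E_i$---so the ``exceptional points are nowhere dense'' step is also unjustified as written, although that part could be repaired by working only with the points whose orbits avoid the vertices, which do exist in abundance once the covering structure is set up correctly.
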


The topological entropy of the map is a measure of the exponential growth rate of the number of periodic orbits. For a Markov graph we can measure the exponential growth rate of the number of walks of length $n$ or the number of closed walks of length $n$. In both cases, it follows from Theorem \ref{general lambda} that this number exists and is $\log \lambda$, where $\lambda$ is the spectral radius of associated matrix.

\begin{definition}
Given a linearized map $L_f$ its {topological entropy} , denoted $h(L_f)$, is $\log \lambda$, where $\lambda$ is the spectral radius of $M(L_f)$.

\end{definition}

We note that the topological entropy of any vertex map $f$ can be defined. The important result is that $h(f) \geq h(L_f)$. It is in this sense that $L_f$ is the simplest vertex map of all the vertex maps that both permute the vertices in the same way as $f$ and are homotopic to $f$. (A basic reference for standard results on topological entropy is \cite{ALM}.)

Theorem 5 tells us that the entropy of $L_f$ is zero if and only if its set of periods is finite.

The following results relate the entropy of $L_f$ to horseshoes of $L_f$. 
\begin{theorem}
Let $f:G \to G$ be a vertex map.  If there is an edge $E$ in $G$ that has an $n$-horseshoe under $L_f^k$, then $h(L_f)\geq \frac{1}{k}\log(n)$.

\end{theorem}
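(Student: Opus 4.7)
The plan is to show that $\lambda := \lambda(M(L_f))$ satisfies $\lambda^k \ge n$, which immediately yields $h(L_f) = \log \lambda \ge \tfrac{1}{k}\log n$. The strategy is to iterate the horseshoe to produce a factor map from $L_f^k$ onto the full $n$-shift.

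For each word $\alpha = (a_1, \dots, a_r)$ over $\{1, \dots, n\}$, I recursively construct a closed subinterval $\mathcal{J}_\alpha \subseteq J_{a_1}$ on which $L_f^k$ is monotone with image $\mathcal{J}_{\sigma\alpha}$, where $\sigma$ is the left-shift and $\mathcal{J}_{(a)} := J_a$. Existence follows from the horseshoe condition $L_f^k(J_{a_1}) \supseteq J_1 \cup \cdots \cup J_n \supseteq \mathcal{J}_{\sigma\alpha}$, and a short induction shows that the $n^r$ subintervals $\{\mathcal{J}_\alpha\}$ have pairwise disjoint interiors and satisfy $L_f^{(r-1)k}(\mathcal{J}_\alpha) = J_{a_r}$. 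Now set $\Lambda := \bigcap_{s \ge 0} L_f^{-sk}(J_1 \cup \cdots \cup J_n)$; this is a compact $L_f^k$-invariant subset of $E$, and to each $x \in \Lambda$ one may associate its itinerary $(b_1(x), b_2(x), \dots) \in \Sigma_n^+$ via $L_f^{(s-1)k}(x) \in J_{b_s(x)}$. The map $\psi: \Lambda \to \Sigma_n^+$ sending $x$ to its itinerary is a continuous surjective semiconjugacy, because every sequence $(b_1, b_2, \dots)$ is realized as the itinerary of any point in the non-empty compact set $\bigcap_r \mathcal{J}_{(b_1, \dots, b_r)}$.

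Since topological entropy is non-increasing under factor maps, $h(L_f^k) \ge h(L_f^k|_\Lambda) \ge h(\Sigma_n^+) = \log n$. By Lemma 3 part (2), $M(L_f^k) = M(L_f)^k$, so the paper's definition of entropy gives $h(L_f^k) = \log \lambda(M(L_f)^k) = k \log \lambda = k \cdot h(L_f)$. Combining yields $h(L_f) \ge \tfrac{1}{k}\log n$.

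The main obstacle is the appeal, in the previous paragraph, to the topological (continuous-map) notion of entropy and its factor-map monotonicity---a standard fact, but one not developed in this excerpt. A more self-contained combinatorial alternative exploits the same $\mathcal{J}_\alpha$ construction directly: for the $n^{r-1}$ words with $a_1 = a_r$, the intermediate value theorem places a fixed point of $L_f^{(r-1)k}$ inside $\mathcal{J}_\alpha$ (using $L_f^{(r-1)k}(\mathcal{J}_\alpha) = J_{a_1} \supseteq \mathcal{J}_\alpha$), yielding $\ge n^m$ periodic points of $L_f^{mk}$ in $E$ with $m := r-1$, after which the principal-submatrix inequality $\lambda^{mk} \ge (M^{mk})_{i,i}$ would close the argument. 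This route is delicate when $MG$ admits closed walks of slope $+1$, which can absorb many horseshoe periodic points into a single subinterval, but such walks lie in a sub-Markov graph of spectral radius $\le 1$ and thus contribute only a polynomial-in-$m$ error.
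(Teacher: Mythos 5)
Your argument establishes a lower bound for the \emph{topological} entropy of $L_f$ (via a semiconjugacy from an invariant set onto the full one-sided $n$-shift), but in this paper $h(L_f)$ is \emph{defined} to be $\log\lambda$, where $\lambda$ is the spectral radius of $M(L_f)$. To convert your bound into the stated conclusion you would need the inequality $h_{\mathrm{top}}(L_f)\le\log\lambda(M(L_f))$ for linearized Markov maps; that is a genuine theorem (a lap-number / Misiurewicz--Szlenk type estimate) that the paper neither states nor proves, and it is exactly the machinery the paper is organized to avoid. You flag this obstacle yourself, but your proposed self-contained fallback does not close the gap either: knowing that $L_f^{mk}$ has at least $n^m$ fixed points in $E$ does not directly bound $(M^{mk})_{i,i}$ from below, since that diagonal entry counts laps of $L_f^{mk}$ on $E$ that map onto the \emph{whole} edge $E$, whereas your intervals $\mathcal{J}_\alpha$ are only known to map onto $J_{a_1}$; and the claim that slope-$+1$ laps contribute only a polynomial-in-$m$ error is asserted rather than proved. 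So as written, neither route is complete.

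The paper's proof is far more direct and stays entirely inside the matrix framework: the $n$-horseshoe on $E_i$ under $L_f^k$ gives $(M^k)_{i,i}\ge n$ (the edge-walk $L_f^k(E_i)$ must traverse $E_i$ at least $n$ times), hence $(M^{lk})_{i,i}\ge n^l$ for all $l$, and part (2) of Theorem \ref{general lambda} --- the upper bound $\limsup_{k}\frac{1}{k}\log[(M^{k})_{j,j}]\le\log\lambda_M$ --- immediately yields $\log\lambda\ge\frac{1}{k}\log n$. The one fact to supply is the implication from the horseshoe to the diagonal entry of $M^k$; once you have that, no symbolic dynamics, factor-map monotonicity, or periodic-point counting is needed.
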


\begin{proof}
Let $E_i$ be an edge that has an $n$-horseshoe under $L_f^k$. Let $M$ denote $M(L_f)$. Then $(M^k)_{i,i}=n$. For all positive integers $l$ we must have $(M^{lk})_{i,i} \geq n^l$. So \[\limsup_{l \to \infty} \frac{1}{lk}\log (M^{lk})_{i,i} \geq \frac{1}{k} \log n.\]

The entropy of $L_f$ is the logarithm of the spectral value of $M$. Theorem \ref{general lambda} completes the proof.

\end{proof}

\begin{theorem}
Let $G$ be a graph, with more than one edge, with linearized map $L_f$ such that is $M(L_f)$ is primitive. Let $h$ denote the entropy of $L_f$. For any $0<s<h$ and any edge $E$ we can find positive integers $k$ and $n$ such that  $1/k \log(n)>s$ and $f^k$ has an $n$-horseshoe on $E$.

\end{theorem}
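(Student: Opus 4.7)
The plan is to combine Theorem \ref{special lambda} (asymptotic growth of diagonal entries of a primitive matrix) directly with Lemma \ref{horseshoes} (conversion of diagonal entries into horseshoes of $f^k$). Write $M = M(L_f)$ and $\lambda = \lambda_M$, so $h = \log \lambda$. Because $G$ has more than one edge and $M$ is primitive, the observation used in the proof of Lemma~4 --- that a primitive Markov matrix of spectral radius $1$ must be the $1 \times 1$ matrix $[1]$ --- forces $\lambda > 1$, hence $h > 0$ and the hypothesis $0 < s < h$ is consistent.

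Let $i$ be the index for which $E_i = E$. The crucial step is to apply Theorem \ref{special lambda} to the specific diagonal pair $(i,i)$, which gives
$$\lim_{k \to \infty} \frac{1}{k} \log \bigl[(M^k)_{i,i}\bigr] = \log \lambda = h.$$
Given $s$ with $0 < s < h$, choose $k$ large enough that $(1/k) \log[(M^k)_{i,i}] > s$, and set $n = (M^k)_{i,i}$. Primitivity guarantees that $(M^k)_{i,i}$ is a positive integer for all sufficiently large $k$, and by construction $(1/k) \log n > s$. Lemma \ref{horseshoes} then converts this diagonal entry into a horseshoe: the edge $E = E_i$ carries an $n$-horseshoe under $f^k$, which is exactly what is required.

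The argument is essentially immediate once the two ingredients are lined up; no genuine obstacle arises. The only point worth flagging is that Theorem \ref{special lambda} has to be invoked at the diagonal index $i$ dictated by the \emph{prescribed} edge $E$, rather than at some extremal or ``convenient'' index chosen after the fact. That this is legitimate is precisely the content of primitivity: the conclusion of Theorem \ref{special lambda} holds uniformly over every pair $(i,j)$, so every edge sees the full exponential growth rate $\lambda^k$ on its diagonal.
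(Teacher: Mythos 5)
Your proof is correct and follows essentially the same route as the paper: apply Theorem \ref{special lambda} at the diagonal index of the prescribed edge to get $(1/k)\log[(M^k)_{i,i}] > s$ for large $k$, then invoke Lemma \ref{horseshoes}. The extra remarks on why $\lambda > 1$ and on the uniformity over indices are sound but not needed beyond what the paper already records.
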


\begin{proof}
Let $\lambda$ denote the spectral radius of $M(L_f)=M$. Theorem \ref{special lambda} tells us that for any $i$, \[\lim_{k \to \infty} \frac{1}{k}\log [(M^k)_{i,i}]=\log \lambda =h.\] 
So given any $s<h$ we can find a value of $k$ such that $\frac{1}{k}\log [(M^k)_{i,i}]>s$. Lemma \ref{horseshoes} completes the proof.

\end{proof}

The same proof using Theorem \ref{general lambda} gives the following.

\begin{theorem}
Let $G$ be a graph, with linearized map $L_f$ with positive entropy $h$. For any $0<s<h$ there exists an edge $E$ and positive integers $k$ and $n$ such that  $1/k \log(n)>s$ and $f^k$ has an $n$-horseshoe on $E$.

\end{theorem}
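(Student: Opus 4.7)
The plan is to mimic the previous theorem's proof, but since we no longer assume $M(L_f)$ is primitive we cannot use Theorem \ref{special lambda} (which applies to arbitrary diagonal entries). Instead, we invoke part (1) of Theorem \ref{general lambda}, which guarantees at least one diagonal entry along a suitable subsequence realizes the spectral radius in the limit.

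First I would set $M=M(L_f)$ and note that by hypothesis $h=\log \lambda_M>0$. Applying Theorem \ref{general lambda}(1), I extract positive integers $i$ and $p$ such that
\[
\lim_{m\to\infty}\frac{1}{pm}\log\bigl[(M^{pm})_{i,i}\bigr]=\log\lambda_M=h.
\]
Given any $s$ with $0<s<h$, the definition of the limit yields some $m$ large enough that $\tfrac{1}{pm}\log[(M^{pm})_{i,i}]>s$.

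Next I would set $k:=pm$, $n:=(M^{k})_{i,i}$, and take the edge $E$ in the statement of the theorem to be $E_i$. By construction $\tfrac{1}{k}\log(n)>s$. Since $n=(M^{k})_{i,i}$ is a positive integer (it is positive because the limit would otherwise not grow), Lemma \ref{horseshoes} applies and tells us that $E_i$ has an $n$-horseshoe under $f^{k}$, which is exactly the desired conclusion.

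There is really no significant obstacle here: the work is all packaged into Theorem \ref{general lambda} and Lemma \ref{horseshoes}. The only subtle point compared with the previous theorem is that we do not get to choose the edge $E$ freely — the edge is forced on us by the index $i$ supplied by Theorem \ref{general lambda}(1) — which is why the statement only asserts the existence of such an edge rather than allowing it to be arbitrary.
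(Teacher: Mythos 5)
Your proof is correct and is exactly the argument the paper intends: the paper simply remarks that ``the same proof using Theorem \ref{general lambda} gives the following,'' and your substitution of Theorem \ref{general lambda}(1) for Theorem \ref{special lambda}, followed by Lemma \ref{horseshoes}, is precisely that. Your closing observation that the edge $E_i$ is now dictated by the index $i$ rather than chosen freely correctly identifies the only difference from the preceding theorem.
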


 \end{document}